\title[Model theory of Heisenberg group]{Some model theory of the Heisenberg group}
\author[M. Fr\c{a}cek]{Maciej Fr\k{a}cek$^{\clubsuit}$}
\thanks{2020 \textit{Mathematics Subject Classification}.
Primary 03C10;
Secondary 03C60, 14L35}
\thanks{\textit{Key words and phrases}. Heisenberg group, unipotent algebraic groups, model completeness}
\address{$^{\clubsuit}$Instytut Matematyczny\\
Uniwersytet Wroc{\l}awski\\
Wroc{\l}aw\\
Poland}
\email{339724@uwr.edu.pl}
\author[P. KOWALSKI]{Piotr Kowalski$^{\diamondsuit}$}
\thanks{$^{\diamondsuit}$
 Supported by the Narodowe Centrum Nauki grant no. 2021/43/B/ST1/00405 and by the T\"{u}bitak grant no. 1001-124F359.}
\address{$^{\diamondsuit}$Instytut Matematyczny\\
Uniwersytet Wroc{\l}awski\\
Wroc{\l}aw\\
Poland}
\email{pkowa@math.uni.wroc.pl} \urladdr{http://www.math.uni.wroc.pl/\textasciitilde pkowa/ }
\DeclareMathOperator{\id}{id}
\DeclareMathOperator{\alg}{alg}
\DeclareMathOperator{\Th}{Th}
\def\moverlay{\mathpalette\mov@rlay}
\def\mov@rlay#1#2{\leavevmode\vtop{%
   \baselineskip\z@skip \lineskiplimit-\maxdimen
   \ialign{\hfil$\m@th#1##$\hfil\cr#2\crcr}}}
\newcommand{\charfusion}[3][\mathord]{
    #1{\ifx#1\mathop\vphantom{#2}\fi
        \mathpalette\mov@rlay{#2\cr#3}
      }
    \ifx#1\mathop\expandafter\displaylimits\fi}
\def\Ind#1#2{#1\setbox0=\hbox{$#1x$}\kern\wd0\hbox to 0pt{\hss$#1\mid$\hss}
\lower.9\ht0\hbox to 0pt{\hss$#1\smile$\hss}\kern\wd0}
\def\Notind#1#2{#1\setbox0=\hbox{$#1x$}\kern\wd0\hbox to 0pt{\mathchardef
\nn=12854\hss$#1\nn$\kern1.4\wd0\hss}\hbox to
0pt{\hss$#1\mid$\hss}\lower.9\ht0 \hbox to
0pt{\hss$#1\smile$\hss}\kern\wd0}
\newtheorem{theorem}{Theorem}[section]
\newtheorem*{theorem*}{Main Theorem}
\newtheorem{prop}[theorem]{Proposition}
\newtheorem{lemma}[theorem]{Lemma}
\newtheorem{fact}[theorem]{Fact}
\theoremstyle{definition}
\newtheorem{definition}[theorem]{Definition}
\newtheorem{remark}[theorem]{Remark}
\begin{document} 

\begin{abstract}
    We show that a field $K$ is model complete (in the language of rings) if and only if the Heisenberg group $H(K)$ is model complete (in the language of groups). To show that, we extend Levchuk's result about automorphisms of $H(K)$ to the case of monomorphisms $H(K)\to H(M)$. We also show that $H(K)$ does not have quantifier elimination and discuss its (non-)bi-interpretability with $K$.
\end{abstract}

\maketitle

\newcommand{\Cc}{{\mathds{C}}}
\newcommand{\Rr}{{\mathds{R}}}
\newcommand{\Nn}{{\mathds{N}}}
\newcommand{\Qq}{{\mathds{Q}}}
\newcommand{\Zz}{{\mathds{Z}}}
\newcommand{\Pp}{{\mathds{P}}}
\newcommand{\Aa}{{\mathds{A}}}
\newcommand{\Ff}{{\mathds{F}}}
\newcommand{\Uu}{{\mathds{U}}}

\section{Introduction}

In this paper, we study model completeness of groups of rational points of the Heisenberg group. Model completeness is a weaker variant of quantifier elimination, where the formulas can be reduced to ones having only existential quantifiers. There are many classical structures which are model complete but do not enjoy quantifier elimination. Examples include the field $\mathbb{R}$ of real numbers~\cite[Theorem 2.7.3]{HoMo}, the field $\mathbb{Q}_p$ of $p$-adic numbers~\cite{Mac76}, perfect PAC fields satisfying some additional Galois-theoretic conditions~\cite{mcefree}, and the exponential field~$(\mathbb{R}, \text{exp})$ of real numbers~\cite{Wilkie}.

If $K$ is a field, then the Heisenberg group $H(K)$ is the group of upper unitriangular $3$ by $3$ matrices with coefficients from $K$. The main result of this paper is as follows.

\begin{theorem*}
Let $K$ be a field. Then the group $H(K)$ is model complete if and only if $K$ is a model complete field.
\end{theorem*}

In general, the model completeness of an algebraic group is not guaranteed, even if the underlying field is model complete. For example, the multiplicative group $\mathbb{Q}_p^\times$ is not model complete for $p > 2$, since the map
$$\mathbb{Q}_p^\times\ni x\mapsto x^p\in \mathbb{Q}_p^\times$$
is a monomorphisms which is not onto. The converse statement also does not hold. For instance, the group $(\mathbb{F}_p(X), +)$ is model complete even though the field $\mathbb{F}_p(X)$ is not.

Similar results were recently obtained in \cite{HKTY} in the case of semisimple split algebraic groups replacing the Heisenberg group. Clearly, our results in this note should generalize to all unitriangular matrix groups and possibly to many other types of unipotent algebraic groups. However, we prefer to have a clear account regarding the basic case of the Heisenberg group first, hence we leave further generalizations to subsequent work.

Model theory of the Heisenberg group has been extensively studied starting from Maltsev's interpretation of $K$ in $H(K)$ (using two extra constants) in \cite{Maltsev}. The authors of \cite{Int} considered an interpretation of $K$ in $H(K)$ without any extra constants. However, the questions of model completeness have not been addressed yet. There is also work of Belegradek \cite{bel94} on model theory of general unitriangular groups over arbitrary rings. Finally, it is shown in \cite{khelif} and \cite{niesgps} that the ring $\Zz$ is not bi-interpretable with the group $H(\Zz)$, see also a very recent work of Danyarova and Myasnikov \cite{danmya} on general theory of (bi-)interpretations.

This paper is organised as follows. In Section \ref{secmon}, we analyse monomorphisms between the rational points of the Heisenberg group and put them into a more general context of central group extensions. The proof of the main result of Section \ref{secmon} follows the steps of the argument of Levchuk from \cite{levchuk83}, where the case of automorphisms was considered. In Section \ref{secmc}, we recall first some basic definitions and facts from model theory regarding interpretability and model completeness. Then, we recall the Maltsev's interpretation of a field in its Heisenberg group from \cite{Maltsev} and afterwards prove the main theorem. In Section \ref{seclast}, we show (using results of Cherlin and Felgner from \cite{cherlin82}) that $H(K)$ does not have quantifier elimination and discuss some questions from Section 5 of \cite{Int} concerning the model theory of Heisenberg groups.

The research from this paper originates from the Bachelor Thesis of the first author which was written under the supervision of the second author.

We would like to thank the members of the model theory group in Wroc{\l}aw for their constructive remarks during the talk of the first author at the model theory seminar at Wroc{\l}aw University.

\section{Monomorphisms}\label{secmon}
In this section, we analyse monomorphisms between Heisenberg groups (over different fields). The material here mostly comes from  \cite{gibbs70} and \cite{levchuk83}, however we simplify and clarify a little the account from \cite{levchuk83} regarding the Heisenberg group by putting it in a more general context of central group extensions and (more importantly) generalize some of the results appearing in \cite{levchuk83} from automorphisms of $H(K)$ to monomorphisms $H(K)\to H(M)$.

\subsection{Automorphisms of central group extensions}\label{secgen}
We start with some general observations about automorphisms of central group extensions. This material is a special case of the results from  \cite{wellsc}, where arbitrary group extensions were analysed.

Let us consider the following short exact sequence of groups
$$1\longrightarrow B\longrightarrow G\longrightarrow A\longrightarrow 1,$$
where $(A,+)$ and $(B,+)$ are commutative and $B$ is mapped into $Z(G)$. Then, there is a cocycle $c\in Z^2(A,B)$ such that $G$ is isomorphic to a group with the universe $A\times B$ and with the following group operation
$$(a,b)\cdot (a',b')=(a+a',b+b'+c(a,a')).$$
We are interested in automorphisms of $G$. Let us fix $\alpha\in \mathrm{Aut}(A)$, $\beta\in \mathrm{Aut}(B)$ and $\gamma:A\to B$. We define the following map
$$\Psi:G\to G,\quad \Psi(a,b)=(\alpha(a),\beta(b)+\gamma(a)).$$
In such a way, we get an action of the group of 2 by 2 ``lower triangular matrices'' having $\alpha,\beta$ on the diagonal and $\gamma$ down the diagonal. In particular, all such maps $\Psi$ are bijections.

\begin{remark}
We were a little puzzled about this group of ``matrices'' above (see also the structure appearing at the formulation of Lemma in \cite[Section 3]{wellsc}). The only general explanation we could figure out is coming from bi-actions by group automorphisms. Suppose than $L,R,N$ are groups, $L$ acts on $N$ by automorphisms on the left, $R$ acts on $N$ by automorphisms on the right and for all $l\in L,r\in R$ and $n\in N$ we have
$$l\cdot (n\cdot r)=(l\cdot n)\cdot r.$$
Then, there is a group structure on the set $L\times N\times R$ given by
$$(l,n,r)*(l',n',r'):=(ll',(l\cdot n')(n\cdot r'),rr').$$
In our case, we have
$$L:=\mathrm{Aut}(A),\quad R:=\mathrm{Aut}(B),\quad N:=\mathrm{Functions}(A,B)$$
and the bi-action is given by the pre-composition and the post-composition of functions.
\end{remark}

By straightforward computations, we obtain the following, which is also a special case of Lemma in \cite[Section 3]{wellsc}.
\begin{lemma}\label{homcond}
The map $\Psi$ above is an automorphism of $G$ if and only if for all $a,a'\in A$, we have
$$\beta(c(a,a'))+\gamma(a+a')=\gamma(a)+\gamma(a')+c(\alpha(a),\alpha(a')).$$
\end{lemma}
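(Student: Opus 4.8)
The plan is to verify directly when the bijection $\Psi$ respects the group operation on $G$. This is an ``if and only if'' about a single equation, so the natural strategy is to compute $\Psi\bigl((a,b)\cdot(a',b')\bigr)$ and $\Psi(a,b)\cdot\Psi(a',b')$ separately, expand both using the twisted multiplication law, and compare. Since $\Psi$ already acts bijectively (as noted, it realises a ``lower triangular'' action), being a homomorphism is exactly equivalent to being an automorphism here, so no separate injectivity/surjectivity argument is needed.

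First I would expand the left-hand side. Using the group law, $(a,b)\cdot(a',b')=(a+a',\,b+b'+c(a,a'))$, so applying $\Psi$ gives
\begin{equation*}
\Psi\bigl((a,b)\cdot(a',b')\bigr)=\bigl(\alpha(a+a'),\,\beta\bigl(b+b'+c(a,a')\bigr)+\gamma(a+a')\bigr).
\end{equation*}
Here I use that $\alpha$ is additive and $\beta$ is additive (both are automorphisms of the commutative groups $A$ and $B$), so the first coordinate is $\alpha(a)+\alpha(a')$ and the $\beta$ term splits as $\beta(b)+\beta(b')+\beta(c(a,a'))$.

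Next I would expand the right-hand side. We have $\Psi(a,b)=(\alpha(a),\beta(b)+\gamma(a))$ and $\Psi(a',b')=(\alpha(a'),\beta(b')+\gamma(a'))$, and multiplying these in $G$ gives
\begin{equation*}
\Psi(a,b)\cdot\Psi(a',b')=\bigl(\alpha(a)+\alpha(a'),\,\beta(b)+\gamma(a)+\beta(b')+\gamma(a')+c(\alpha(a),\alpha(a'))\bigr).
\end{equation*}
The first coordinates of the two sides agree automatically because $\alpha$ is a homomorphism, so $\Psi$ is an automorphism precisely when the second coordinates coincide for all $a,a',b,b'$. Comparing the two second coordinates and cancelling the common terms $\beta(b)+\beta(b')$ (legitimate since $B$ is a group) leaves exactly
\begin{equation*}
\beta(c(a,a'))+\gamma(a+a')=\gamma(a)+\gamma(a')+c(\alpha(a),\alpha(a')),
\end{equation*}
which is the claimed condition.

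There is no real obstacle here; the argument is a routine verification, and the only points requiring any care are bookkeeping ones: using commutativity of $B$ to freely reorder the additive terms in the second coordinate, and using additivity of $\alpha$ and $\beta$ together with the homomorphism property of $\alpha$ so that the first coordinates match identically. The mild conceptual content is recognising that the $b,b'$ dependence drops out entirely, so the homomorphism condition becomes a statement purely about $\alpha,\beta,\gamma$ and the cocycle $c$, which is exactly the displayed equation.
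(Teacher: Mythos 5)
Your computation is correct and is exactly the ``straightforward computation'' the paper alludes to: expand $\Psi$ of a product and the product of the $\Psi$-images, note the first coordinates agree by additivity of $\alpha$, cancel $\beta(b)+\beta(b')$ in the second coordinate, and use that $\Psi$ is already known to be a bijection so that the homomorphism condition is the whole story. Nothing to add.
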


\subsection{Monomorphisms between Heisenberg groups}\label{secheis}

Let $K$ be a field. The Heisenberg group
$$H(K)=\mathrm{UT}_3(K)$$
is the group of upper unitriangular $3$ by $3$ matrices, that is matrices of the form:
$$(a, b, c)_K:= \begin{pmatrix}
1 & a & c \\
0 & 1 & b \\
0 & 0 & 1
\end{pmatrix}$$
for $a,b,c \in K$. We will often just write ``$(a,b,c)$'' instead of ``$(a,b,c)_K$''. We regard $H$ as a functor from the category of fields to the category of groups.

We have the following short exact sequence of groups:
\[\begin{tikzcd}
	0 & {(K, +)} & {H(K)} & {(K^2, +)} & 0
	\arrow[from=1-1, to=1-2]
	\arrow[from=1-2, to=1-3]
	\arrow[from=1-3, to=1-4]
	\arrow[from=1-4, to=1-5]
\end{tikzcd}\]
where the second map takes $c \in K$ to $(0,0,c)\in H(K)$, and the third one maps $(a,b,c) \in H(K)$ to $(a,b)\in K^2$. Thus, $H(K)$ is an extension of $(K^2, +)$ by $(K, +)$. Furthermore, the image of the second map is precisely the center of $H(K)$, so this is a central extension. The center of $H(K)$ coincides with its commutator subgroup, and it is the subgroup of matrices with only the upper right corner possibly non zero, that is we have the following
$$Z\left(H(K)\right) = \left[H(K), H(K)\right] = \left\{(0,0,c) \mid c \in K \right\}.$$
Therefore, this situation fits perfectly to the set-up of Section \ref{secgen}. It is easy to check that the corresponding cocycle is
$$c\in Z^2\left((K,+)^2,(K,+)\right),\quad c((x,y),(x',y'))=xy'.$$
We also note the following formula for commutators in $H(K)$, which will be used in the sequel:
$$\left[ (a, b, c),(a', b', c') \right] = (0, 0, ab' - ba').$$

Following \cite{levchuk83}, we consider a special type of automorphisms of the Heisenberg group which we interpret using the terminology from  Section \ref{secgen}. It will turn out (as in \cite{levchuk83}) that there are no others. Let us fix
$$A=\begin{pmatrix}
 a & b \\
 c & d
\end{pmatrix}\in \mathrm{GL}_2(K).$$
We take as $\alpha$ the following $K$-linear map
$$\alpha:K^2\to K^2,\quad \alpha(x,y)=(ax+by,cx+dy)$$
and as $\beta$ a $K$-linear map as well
$$\beta:K\to K,\quad \beta(z)=\det(A)z.$$
Finally, we take $\gamma$ of the following form
$$\gamma:K^2\to K,\quad \gamma(x,y)=\Psi_1(x)+\Psi_2(y)+bcxy$$
for some $\Psi_1,\Psi_2:K\to K$. Using Lemma \ref{homcond}, we obtain the following.
\begin{lemma}\label{homheis}
The corresponding map
$$\Psi(x,y,z)=(A(x,y),\det(A)z+\Psi_1(x)+\Psi_2(y)+bcxy)$$
is an automorphism if and only if for all $x_1,x_2,y_1,y_2\in K$ we have
$$\Psi_1(x_1+x_2)+\Psi_2(y_1+y_2)=\Psi_1(x_1)+\Psi_1(x_2)+\Psi_2(y_1)+\Psi_2(y_2)+acx_1x_2+bdy_1y_2.$$
\end{lemma}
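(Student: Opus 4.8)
The plan is to derive the statement as a direct specialization of Lemma~\ref{homcond}. Concretely, I would substitute the three chosen maps --- $\alpha=A$, $\beta=\det(A)\cdot\mathrm{id}$, and $\gamma(x,y)=\Psi_1(x)+\Psi_2(y)$ --- together with the cocycle $c((x,y),(x',y'))=xy'$ into the homomorphism criterion
$$\beta(c(a,a'))+\gamma(a+a')=\gamma(a)+\gamma(a')+c(\alpha(a),\alpha(a')),$$
and then read off the resulting identity in $\Psi_1,\Psi_2$. Since $\Psi$ is already known from Section~\ref{secgen} to be a bijection, being an automorphism is equivalent to being a homomorphism, so Lemma~\ref{homcond} applies verbatim and no separate surjectivity/injectivity argument is needed.

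First I would expand the two genuinely new terms. Writing $a=(x,y)$ and $a'=(x',y')$, the difference $\gamma(a+a')-\gamma(a)-\gamma(a')$ collapses to $[\Psi_1(x+x')-\Psi_1(x)-\Psi_1(x')]+[\Psi_2(y+y')-\Psi_2(y)-\Psi_2(y')]$, which is exactly the combination of $\Psi_i$-values appearing in the claimed functional equation once everything is moved to one side. For the cocycle side I would compute
$$c(\alpha(a),\alpha(a'))=(ax+by)(cx'+dy')=ac\,xx'+ad\,xy'+bc\,x'y+bd\,yy',$$
and separately $\beta(c(a,a'))=\det(A)\,xy'=(ad-bc)\,xy'$. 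Transferring $\beta(c(a,a'))$ to the right-hand side, the term $ad\,xy'$ produced by $c(\alpha(a),\alpha(a'))$ is annihilated by the $ad\,xy'$ concealed inside $(ad-bc)xy'$, which is the crucial cancellation; rearranging so that the $\gamma$-differences stand alone on the left then yields the displayed equation. Every manipulation is an equivalence, so the converse direction comes for free.

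I expect the only real obstacle to be clerical rather than conceptual: one must keep careful track of the four bilinear monomials $xx'$, $xy'$, $x'y$, $yy'$, and in particular watch how the mixed terms $bc\,x'y$ and $bc\,xy'$ interact with the $-bc\,xy'$ contributed by $\det(A)$. This is also the step that pins down why $\beta$ must be the scalar $\det(A)$ and not some other multiple: the same normalization is already forced by compatibility with the commutator map $[(x,y,z),(x',y',z')]=(0,0,xy'-x'y)$, since the induced map on $Z(H(K))$ must rescale the symplectic form $xy'-x'y$ by $\det(A)$. I would therefore double-check the coefficient bookkeeping on the $xy'$- and $x'y$-monomials against this commutator constraint before declaring the functional equation in the required form, as that is where any sign or convention slip would surface.
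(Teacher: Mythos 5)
Your overall strategy is exactly the intended one: Lemma \ref{homcond} applies verbatim with $\alpha=A$, $\beta=\det(A)\cdot\mathrm{id}$, $\gamma(x,y)=\Psi_1(x)+\Psi_2(y)$ and $c((x,y),(x',y'))=xy'$, and bijectivity is already handled in Section \ref{secgen}. The problem is in the bookkeeping of the mixed monomials --- precisely the step you flagged as delicate and then asserted works out. Expanding, $c(\alpha(a),\alpha(a'))=(ax+by)(cx'+dy')=ac\,xx'+ad\,xy'+bc\,x'y+bd\,yy'$, and carrying $\beta(c(a,a'))=(ad-bc)xy'$ over to the right-hand side contributes $-ad\,xy'+bc\,xy'$. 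The $ad\,xy'$ terms do cancel, but the surviving mixed terms are $bc\,x'y+bc\,xy'=bc(xy'+x'y)$: they have the same sign and do not annihilate each other. What the substitution actually yields is
$$\Psi_1(x+x')+\Psi_2(y+y')=\Psi_1(x)+\Psi_1(x')+\Psi_2(y)+\Psi_2(y')+ac\,xx'+bd\,yy'+bc(xy'+x'y),$$
not the displayed identity; your claim that ``rearranging then yields the displayed equation'' is where the argument breaks.

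This is not a removable blemish, since the two conditions are genuinely inequivalent unless $bc=0$. Take $A=\left(\begin{smallmatrix}0&1\\1&0\end{smallmatrix}\right)$ and $\Psi_1=\Psi_2=0$: then $ac=bd=0$, so the displayed functional equation holds trivially, yet $\Psi(x,y,z)=(y,x,-z)$ is not a homomorphism, because $\Psi\bigl((x,y,z)(x',y',z')\bigr)$ has third coordinate $-z-z'-xy'$ while $\Psi(x,y,z)\Psi(x',y',z')$ has third coordinate $-z-z'+x'y$. (Setting $x'=y=0$ in the corrected identity shows that a $\gamma$ of the separable form $\Psi_1(x)+\Psi_2(y)$ forces $bc=0$; for a general matrix one must instead take $\gamma(x,y)=\Psi_1(x)+\Psi_2(y)+bc\,xy$, as the genuine automorphism $(x,y,z)\mapsto(y,x,xy-z)$ illustrates.) Your side remark that $\beta=\det(A)$ is forced by the commutator identity is correct, but that constraint only sees the antisymmetrization $xy'-x'y$ of the cocycle and is blind to the symmetric part, which is exactly where the trouble lies.
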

\begin{proof}
Let us take arbitrary
$$\bar{x}=(x_1,x_2)\in K^2,\quad \bar{y}=(y_1,y_2)\in K^2.$$
Then, using Lemma \ref{homcond}, $\Psi$ is an automorphism if and only if
\begin{equation}
\gamma(\bar{x}+\bar{y})-\gamma(\bar{x})-\gamma(\bar{y})=c(A\bar{x},A\bar{y})-\det(A)c(\bar{x},\bar{y}).\tag{$*$}
\end{equation}
Since $c(\bar{x},\bar{y})=x_1y_2$, the right-hand side in $(*)$ coincides with
$$acx_1y_1+bdx_2y_2+bc\left(x_1y_2+x_2y_1\right).$$
It is easy to check that for $\gamma(\bar{x})=\Psi_1(x_1)+\Psi_2(x_2)+bcx_1x_2$, the left-hand side in $(*)$ coincides with
$$\Psi_1(x_1+x_2)+\Psi_2(y_1+y_2)-\Psi_1(x_1)-\Psi_1(x_2)-\Psi_2(y_1)-\Psi_2(y_2)+bc\left(x_1y_2+x_2y_1\right),$$
which gives the result.
\end{proof}

Therefore, we fix $A$ as above and $\Psi_1,\Psi_2$ satisfying for all $x,x'\in K$
$$\Psi_1(x+x')=\Psi_1(x)+\Psi_1(x')+acxx',\quad \Psi_2(x+x')=\Psi_2(x)+\Psi_2(x')+bdxx',$$
which is exactly the set-up of Levchuk from \cite{levchuk83}. Levchuk shows then that \emph{all} automorphisms of $H(K)$ are of this form modulo the ones induced by field automorphisms of $K$. We will extend this result to monomorphisms of the form $H(K)\to H(M)$. To this end, we need the following result.
\begin{lemma}\label{psi_lemma}
Let $K\subseteq M$ be a field extension, $c \in K$, and $\psi : K \to K$ be
such that for all $x, y \in K$, we have
$$\psi(x + y) = \psi(x) + \psi(y) + cxy.$$
Then, we can extend $\psi$ to $\Psi : M \to  M$ such that for all $x, y \in M$, we have
$$\Psi(x + y) = \Psi(x) + \Psi(y) + cxy.$$
\end{lemma}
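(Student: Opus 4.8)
The plan is to reduce the twisted additivity condition to genuine additivity and then to extend the additive part by a basis argument. Assume first that $\cha(K)\neq 2$ and complete the square: put $\phi(x):=\psi(x)-\frac{c}{2}x^2$ for $x\in K$. Using $(x+y)^2=x^2+2xy+y^2$, a direct computation shows that the hypothesis on $\psi$ is equivalent to $\phi(x+y)=\phi(x)+\phi(y)$, so that $\phi$ is an additive map $(K,+)\to(K,+)$. Conversely, for any additive $\Phi\colon(M,+)\to(M,+)$ the function $\Psi(x):=\Phi(x)+\frac{c}{2}x^2$ solves the twisted equation over $M$, and $\Psi|_K=\psi$ as soon as $\Phi|_K=\phi$. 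Thus the task reduces to extending the additive map $\phi\colon(K,+)\to(M,+)$ to an additive map $\Phi\colon(M,+)\to(M,+)$.

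To carry out this extension I would use that an additive map is automatically linear over the prime field $\mathbb{F}$ of $K$ (namely $\mathbb{Q}$ in characteristic $0$ and $\mathbb{F}_p$ in characteristic $p$): in positive characteristic additivity already gives $\mathbb{F}_p$-linearity, while in characteristic $0$ one checks $\phi(qx)=q\phi(x)$ for all $q\in\mathbb{Q}$ in the usual way. Now $(M,+)$ is an $\mathbb{F}$-vector space and $(K,+)$ an $\mathbb{F}$-subspace, so I choose an $\mathbb{F}$-basis of $K$, extend it to an $\mathbb{F}$-basis of $M$, and define $\Phi$ to agree with $\phi$ on the basis vectors lying in $K$ and to vanish on the remaining ones; extending $\mathbb{F}$-linearly yields an additive $\Phi\colon M\to M$ with $\Phi|_K=\phi$. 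Setting $\Psi(x):=\Phi(x)+\frac{c}{2}x^2$ then completes this case.

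The only case in which completing the square is unavailable is $\cha(K)=2$, and this is the step that needs the most attention, although it turns out to trivialize rather than obstruct. Indeed, substituting $y=x$ into the hypothesis and using $x+x=0$ together with $\psi(0)=0$ gives $cx^2=0$ for all $x\in K$; taking $x=1$ forces $c=0$. Hence in characteristic $2$ the equation is plain additivity, and the same basis-extension argument over $\mathbb{F}_2$ applies verbatim with the quadratic term deleted. I therefore expect the genuine content of the proof to be the interplay of two simple observations: the normalization that strips the twisted equation down to additivity (completing the square, respectively noticing the degeneration $c=0$), and the fact that additive maps out of a subfield always extend because everything in sight is a vector space over the prime field. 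In particular no cohomological obstruction survives, and the desired $\Psi$ always exists.
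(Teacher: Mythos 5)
Your proof is correct, and it reaches the same destination as the paper's by a slightly different route. The paper fixes a $K$-linear complement $L$ with $M=K\oplus L$ and writes down the extension explicitly, $\Psi(k+l):=\psi(k)+\frac{c}{2}l^2+ckl$, verifying the functional equation by a direct expansion; you instead normalize first, observing that $\phi(x):=\psi(x)-\frac{c}{2}x^2$ is additive, so that the problem reduces to extending an additive map $K\to M$ to $M$, which you do by prime-field linearity and a basis extension, and then you restore the quadratic term. The two constructions essentially coincide: if you unwind your $\Psi(x)=\Phi(x)+\frac{c}{2}x^2$ with $\Phi$ vanishing on a complement of $K$, you recover the paper's formula (with a prime-field-linear rather than $K$-linear complement, which is immaterial). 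What your version buys is a conceptual explanation of where the paper's formula comes from and a reduction to the standard fact that additive maps off a subgroup of a vector space over the prime field extend; what the paper's version buys is brevity, since it skips the discussion of $\mathbb{Q}$- resp.\ $\mathbb{F}_p$-linearity and just checks the identity. Your treatment of characteristic $2$ (deducing $c=0$ from $\psi(x+x)=\psi(0)=0$, hence plain additivity) is the same degeneration the paper exploits via $\psi(1+1)$.
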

\begin{proof}
Let $L$ be a $K$-linear subspace of $M$ such that $M = K \oplus L$.
We notice that
$$\psi(0) = \psi(0 + 0) = \psi(0) + \psi(0) + 0.$$
Therefore, we obtain $\psi(0) = 0$.

If $\mathrm{char}(K) = 2$, then we have:
$$0 = \psi(0) = \psi(1 + 1) = \psi(1) + \psi(1) + c\cdot 1^2 = c.$$
Therefore, we get that $c = 0$. Thus, for all $k \in K$ and $l \in L$, we can simply define
$$\Psi(k + l):= \psi(k).$$
If $\mathrm{char}(K) \neq 2$, then for all $k \in K$ and $l \in L$ we set
$$\Psi(k + l):= \psi(k) + \frac{c}{2}l^2 + ckl.$$
We check below that $\Psi$ satisfies the desired equality. For any $k, k' \in K$ and $l,l' \in L$, we have the following
\begin{IEEEeqnarray*}{rCl}
\Psi((k + l) + (k' + l')) & = & \Psi((k + k') + (l + l')) \\
& = & \psi(k + k') + \frac{c}{2}(l + l')^2 + c(k + k')(l + l') \\
& = & \psi(k) + \psi(k') + ckk' + \frac{c}{2}l^2 + \frac{c}{2}l'^2 + cll' + ckl + ck'l' + ckl' + ck'l \\
& = & \Psi(k + l) + \Psi(k' + l') + ckk' + cll' + ckl' + clk' \\
& = & \Psi(k + l) + \Psi(k' + l') + c(k + l)(k' + l').
\end{IEEEeqnarray*}
Therefore, we see that $\Psi$ satisfies the necessary condition.
\end{proof}
We are ready now to show the main result of this section.
\begin{theorem}\label{homthm}
Let $K$ and $M$ be infinite fields, and $\Psi: H(K) \to H(M)$ be a group monomorphism.
Then there is a group automorphism $\Phi : H(M) \to H(M)$, and a field homomorphism
$\theta : K \to M$ such that $\Psi = \Phi \circ H(\theta)$.
\end{theorem}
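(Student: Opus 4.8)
The plan is to read off from $\Psi$ the three data prescribed by the central extension structure of $H$, and then recognise them as a field embedding post-composed with a Levchuk-type automorphism of $H(M)$. Being a homomorphism, $\Psi$ sends commutators to commutators; since $Z(H(K))=[H(K),H(K)]$ and the same holds over $M$, this gives $\Psi(Z(H(K)))\subseteq Z(H(M))$, while injectivity gives $\Psi^{-1}(Z(H(M)))=Z(H(K))$. Hence $\Psi$ restricts to an injective additive map $\bar\Psi\colon (K,+)\to(M,+)$ on centres and induces an injective additive map $\hat\Psi\colon(K^2,+)\to(M^2,+)$ on quotients, and writing $\Psi(x,y,z)=(\hat\Psi(x,y),\bar\Psi(z)+\gamma(x,y))$ for a function $\gamma\colon K^2\to M$, the homomorphism property is equivalent to the morphism-between-extensions analogue of Lemma~\ref{homcond}, namely $\bar\Psi(xy')+\gamma\big((x,y)+(x',y')\big)=\gamma(x,y)+\gamma(x',y')+c\big(\hat\Psi(x,y),\hat\Psi(x',y')\big)$.

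The heart of the proof, and the step I expect to be the main obstacle, is to pin down $\hat\Psi$ and $\bar\Psi$ from the commutator identity $[(a,b,c),(a',b',c')]=(0,0,ab'-ba')$. Applying $\Psi$ to it yields, for all $x,y,x',y'\in K$, the relation $\bar\Psi(xy'-x'y)=pq'-qp'$ where $(p,q)=\hat\Psi(x,y)$ and $(p',q')=\hat\Psi(x',y')$. Writing $\hat\Psi(x,y)=(f_1(x)+f_2(y),\,g_1(x)+g_2(y))$ by additivity and specialising to $(x,0),(x',0)$ gives $f_1(x)g_1(x')=g_1(x)f_1(x')$, so the image of $x\mapsto(f_1(x),g_1(x))$ lies on a single line $M v_1\subseteq M^2$; similarly $y\mapsto(f_2(y),g_2(y))$ lies on a line $M v_2$. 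Thus $\hat\Psi(x,y)=\phi_1(x)v_1+\phi_2(y)v_2$ for additive $\phi_1,\phi_2\colon K\to M$, and specialising the relation to $(x,0),(0,y')$ gives $\bar\Psi(xy')=D\,\phi_1(x)\phi_2(y')$ with $D=\det[v_1\mid v_2]$. If $D=0$ this forces $\bar\Psi\equiv 0$, contradicting injectivity, so $D\neq 0$; putting $y'=1$ and then $x=1$ expresses $\phi_1,\phi_2$ back through $\bar\Psi$ and yields $\bar\Psi(xy)\,\bar\Psi(1)=\bar\Psi(x)\,\bar\Psi(y)$. Consequently $\theta:=\bar\Psi/\bar\Psi(1)$ is additive and multiplicative with $\theta(1)=1$, i.e.\ a field embedding $K\to M$; one then reads off $\bar\Psi=s\,\theta$ and $\hat\Psi=B\circ(\theta\times\theta)$ for some $B\in\mathrm{GL}_2(M)$ with $s:=\det B=\bar\Psi(1)$.

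It remains to treat $\gamma$ and to build $\Phi$. Specialising the compatibility equation to $y=y'=0$ and to $x=x'=0$, and writing $B=\begin{pmatrix}a&b\\c&d\end{pmatrix}$, I would obtain $\gamma(x,y)=\gamma_1(x)+\gamma_2(y)+bc\,\theta(x)\theta(y)$ where $\gamma_i(t+t')=\gamma_i(t)+\gamma_i(t')+\kappa_i\,\theta(t)\theta(t')$ with $\kappa_1=ac$, $\kappa_2=bd$. Transporting along $\theta$ gives $\psi_i\colon\theta(K)\to M$, $\psi_i(\theta t)=\gamma_i(t)$, satisfying $\psi_i(u+u')=\psi_i(u)+\psi_i(u')+\kappa_i uu'$ on $\theta(K)$, and the proof of Lemma~\ref{psi_lemma} applies verbatim (with the codomain enlarged to $M$ and the constant taken in $M$) to extend each $\psi_i$ to $\Psi_i\colon M\to M$ satisfying the same identity over all of $M$. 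Finally I set $\Phi(u,v,w)=\big(B(u,v),\,sw+\Psi_1(u)+\Psi_2(v)+bc\,uv\big)$; checking it against Lemma~\ref{homcond} shows $\Phi$ is a homomorphism, the extra $bc\,uv$ term being exactly what absorbs the mixed terms arising from $c(B\cdot,B\cdot)$, and $\Phi$ is bijective since $B\in\mathrm{GL}_2(M)$ and $s\neq 0$, hence an automorphism. Evaluating $\Phi\circ H(\theta)$ on $(x,y,z)$ and using $\Psi_i|_{\theta(K)}=\psi_i$ reproduces $(\hat\Psi(x,y),\bar\Psi(z)+\gamma(x,y))=\Psi(x,y,z)$, so $\Psi=\Phi\circ H(\theta)$. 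The only delicate point is characteristic $2$, where (as in the proof of Lemma~\ref{psi_lemma}) $\kappa_1=\kappa_2=0$ is forced, the $\Psi_i$ are genuinely additive and are extended by $\theta(K)$-linearity, and the $bc\,uv$ correction remains valid.
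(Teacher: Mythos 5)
Your proposal is correct and follows the same skeleton as the paper's proof: factor $\Psi$ through the central extension, extract the field embedding $\theta$ from the commutator identity $[(a,b,c),(a',b',c')]=(0,0,ab'-ba')$, extend the resulting quadratic-additive maps from $\theta(K)$ to $M$ via (a mild strengthening of) Lemma~\ref{psi_lemma}, and assemble $\Phi$. Two points where you genuinely diverge are worth recording. First, you obtain $\Psi(Z(H(K)))\subseteq Z(H(M))$ purely group-theoretically from $Z(H)=[H,H]$ on both sides (homomorphisms send commutators to commutators); the paper instead passes to the algebraic closure of $M$ and runs a dimension/solvability argument on the centralizer of the image of the centre. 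Your argument is shorter and avoids algebraic groups altogether. Second, you carry the cross term $bc\,uv$ in $\gamma$ and in the definition of $\Phi$, and this is not cosmetic: for a separable $\gamma(x,y)=\Psi_1(x)+\Psi_2(y)$ the compatibility condition of Lemma~\ref{homcond} acquires an extra summand $bc(xy'+x'y)$ from $c(\alpha(a),\alpha(a'))-\det(A)xy'$, so the map in Lemma~\ref{homheis} is a homomorphism only when $bc=0$ (e.g.\ $(x,y,z)\mapsto(y,x,-z)$ is not a homomorphism, while $(x,y,z)\mapsto(y,x,-z+xy)$ is). Your $\Phi$, with the correction $f_2(1)g_1(1)\,uv$ in the last coordinate, is the one for which $\Psi=\Phi\circ H(\theta)$ actually holds on all of $H(K)$ rather than only on the generators $(x,0,0)$, $(0,y,0)$, $(0,0,z)$; on this point your write-up is more careful than the paper's.
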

\begin{proof}
Let $M'$ be the algebraic closure of $M$. We consider the following centralizer (in $H(M')$)
$$C:=C_{H(M')}(\Psi(Z(H(K))))$$
of the image by $\Psi$ of the center of $H(K)$.
\\
\\
\textbf{Claim}
\\
$C=H(M')$.
\begin{proof}[Proof of Claim]
Assume not and we will reach a contradiction. Let us notice first that $C$ is a Zariski closed subgroup of $H(M')$ (as any centralizer in any algebraic group). Since $C$ is proper in the connected algebraic group $H(M')$ of dimension 3, we obtain that $\dim(C)\leqslant 2$. Let $C^0$ be the connected component of $C$. We will argue first that $C^0$ is commutative. Since $C^0$ is connected, it is enough to show that the Lie algebra $\mathrm{Lie}(C^0)$ is commutative. However, $\mathrm{Lie}(C^0)$ is nilpotent and of dimension at most two, therefore (by e.g. \cite[Section 3.1]{erdmann_lie}) $\mathrm{Lie}(C^0)$ is commutative.

Since $[C:C^0]$ is finite, $C^0$ is commutative and $H(K)$ embeds into $C$, we obtain that $H(K)$ has a commutative subgroup of finite index. It should be well-known that such subgroups do not exist for infinite fields $K$. To see that, one can e.g. take the Zariski closure in the algebraic group $H(K')$, where $K'$ is the algebraic closure of $K$, and obtain a Zariski closed commutative subgroup of $H(K')$ of finite index. Since $H(K')$ is connected, it is then commutative, a contradiction.
\end{proof}
Since $C=H(M')$, we obtain that $\Psi(Z(H(K)))\subseteq Z(H(M))$. Using this inclusion, let us introduce the following notation for $x,y,z\in K$:
\begin{IEEEeqnarray*}{rCl}
\Psi((x, 0, 0)_K) &  = & h_M(f_1(x), g_1(x), i_1(x)), \\
\Psi((0, y, 0)_K) & = & h_M(f_2(y), g_2(y), i_2(y)), \\
\Psi((0, 0, z)_K) & = & h_M(0,0,i(z)).
\end{IEEEeqnarray*}
For $x, y \in K$, we consider the following commutator:
\begin{IEEEeqnarray*}{rCl}
 (0,0,i(xy)) &  = & \Psi((0,0,xy)) = \Psi(\left[(x,0,0),(0,y,0)\right]) \\
& = & \left[\Psi((x,0,0)),\Psi((0,y,0))\right] \\
& = & \left[(f_1(x), g_1(x), i_1(x)),(f_2(y), g_2(y), i_2(y))\right] \\
& = & (0,0,f_1(x)g_2(y) - f_2(y)g_1(x)).
\end{IEEEeqnarray*}
Therefore, we have
\begin{equation}
i(xy) = f_1(x)g_2(y) - f_2(y)g_1(x).
\end{equation}
Moreover, since $(x,0,0)$ and $(y,0,0)$ commute, we obtain
\begin{IEEEeqnarray*}{rCl}
(0,0,0) & = &   \Psi((0,0,0)) = \Psi(\left[(x,0,0),(y,0,0)\right] \\
& = & \left[\Psi((x,0,0)),\Psi((y,0,0))\right] \\
& = & \left[(f_1(x), g_1(x), i_1(x)),(f_1(y), g_1(y), i_1(y))\right] \\
& = & (0,0, f_1(x)g_1(y) - f_1(y)g_1(x)).
\end{IEEEeqnarray*}
Hence, we have
\begin{equation}
f_1(x)g_1(y) = f_1(y)g_1(x).
\end{equation}
By a similar computation on the commutator of $(0,x,0)$ and $(0,y,0)$, we obtain
\begin{equation}
f_2(x)g_2(y) = f_2(y)g_2(x).
\end{equation}
Let us define
$$d:= i(1) = f_1(1)g_2(1) - f_2(1)g_1(1).$$
Since $\Psi$ is a monomorphism, we get that $d \neq 0$. Using $(1)$--$(3)$, we obtain
$$i(xy)d  =  i(x)i(y).$$
Let us define
$$\theta:= d^{-1}i.$$
Then, we have the following
$$\theta(1) = d^{-1}i(1) = d^{-1}d = 1,$$
$$\theta(xy) = d^{-2}di(xy) = d^{-2}i(x)i(y) = \theta(x)\theta(y).$$
Therefore, $\theta$ is a field homomorphism.

Moreover, we have
\begin{IEEEeqnarray*}{rCl}
dg_1(x) & = &   (f_1(1)g_2(1) - f_2(1)g_1(1))g_1(x) \\
& = & f_1(1)g_1(x)g_2(1) - f_2(1)g_1(1)g_1(x) \\
& = & g_1(1)f_1(x)g_2(1) - g_1(1)g_1(x)f_2(1) \\
& = & g_1(1)i(x).
\end{IEEEeqnarray*}
Similarly, we obtain
$$df_1(x) =  f_1(1)i(x),\quad df_2(x) =  f_2(1)i(x),\quad dg_2(x) = g_2(1)i(x).$$
Thus, we see that
$$f_1 = f_1(1)\theta,\quad g_1 = g_1(1)\theta,\quad f_2 = f_2(1)\theta,\quad g_2 = g_2(1)\theta.$$
Let $\eta:M\to K$ be a $K$-linear map such that $\eta\circ \theta=\id_K$. We clearly have:
$$i_1 = (i_1 \circ \eta) \circ \theta,\quad i_2 = (i_2 \circ \eta) \circ \theta.$$
Let us finally define
$$\psi_1:= i_1 \circ \eta,\quad \psi_2:= i_2 \circ \eta.$$
We see that
\begin{IEEEeqnarray*}{rCl}
\Psi((x,0,0)(y,0,0)) & = &   (f_1(1)\theta(x + y), g_1(1)\theta(x + y), \psi_1(\theta(x) + \theta(y))) \\
& = & \Psi((x,0,0))\Psi((y,0,0)) \\
& = & (f_1(1)\theta(x), g_1(1)\theta(x), \psi_1(\theta(x)))(f_1(1)\theta(y), g_1(1)\theta(y), \psi_1(\theta(y))) \\
& = & (f_1(1)\theta(x + y), g_1(1)\theta(x + y), \psi_1(\theta(x)) + \psi_1(\theta(y))
+ f_1(1)g_1(1)\theta(x)\theta(y)).
\end{IEEEeqnarray*}
Therefore, for any $a, b \in \theta(K)$ we have
$$\psi_1(a +b) = \psi_1(a) + \psi_1(b) + f_1(1)g_1(1)ab.$$
By a similar computation on $(0,x,0)$ and $(0,y,0)$, we see that
$$\psi_2(a +b) = \psi_2(a) + \psi_2(b) + f_2(1)g_2(1)ab.$$
Let us take
$$\Psi_1, \Psi_2:M\to M$$
extending $\psi_1, \psi_2$ respectively which are given by Lemma \ref{psi_lemma}, and let $\Phi$ be the automorphism associated with the matrix
$A = \begin{pmatrix}
f_1(1) & f_2(1) \\
g_1(1) & g_2(1)
\end{pmatrix}$  and the maps $\Psi_1, \Psi_2$. Therefore, we obtain
$$\det(A) = d\quad \text{and}\quad \Psi = \Phi \circ H(\theta),$$
which we needed to show.
\end{proof}

\subsection{Description}\label{gibbssec}
In this subsection, we provide some additional information on the group of automorphisms of $H(K)$. We need the following.

\begin{theorem}[Levchuk]\label{thmlev}
Let $K$ be a field.
\begin{enumerate}
\item If $\mathrm{char}(K)\neq 2$, then
$$\mathrm{Aut}(H(K))\cong \left(\mathrm{End}(K,+)^2\rtimes \mathrm{GL}_2(K)\right)\rtimes \mathrm{Aut}(K).$$

\item If $\mathrm{char}(K)=2$, then
$$\mathrm{Aut}(H(K))\cong \left(\mathrm{End}(K,+)^2\rtimes \left(\left(K^*\right)^2\rtimes \Zz/2\Zz\right)\right)\rtimes \mathrm{Aut}(K).$$
\end{enumerate}
\end{theorem}

\begin{proof}
The first part is included in \cite[Corollary 5]{levchuk83} and we also provide a more explicit argument in Remark \ref{explicit} below.

The second part should follow from the sentence just below the statement of \cite[Lemma 16]{levchuk83}), however, we also provide a quick argument here. As in the proof of Lemma \ref{psi_lemma}, we see that the corresponding maps $\Psi_1,\Psi_2:K\to K$ need to be additive in the case of characteristic 2. In general, a straightforward computation shows that for additive maps $\Psi_1,\Psi_2$, the map
$$\Psi(x,y,z)=(A(x,y),\det(A)z+\Psi_1(x)+\Psi_2(y)+bcxy)$$
is a an automorphism of $H(K)$ if and only if for all $x,x',y,y'\in K$ we have
$$\det(A)xy'+bc(x+x')(y+y')=bc(xy+x'y')+(ax+by)(cx'+dy'),$$
where $A=\begin{pmatrix}
 a & b \\
 c & d
\end{pmatrix}\in \mathrm{GL}_2(K)$. Since $\det(A)\neq 0$, another easy computation shows that the above holds if and only if $A$ is diagonal or anti-diagonal. Such matrices form a subgroup of $\mathrm{GL}_2(K)$ which is isomorphic to $(K^*)^2\rtimes \Zz/2\Zz$, where $\Zz/2\Zz$ acts on $(K^*)^2$ coordinate-wise.
\end{proof}

\begin{remark}\label{explicit}
Gibbs gave in \cite{gibbs70} another description of automorphisms of groups of rational points of unipotent groups. The class of unipotent groups from \cite{gibbs70} is much larger than the one considered by Levchuk in \cite{levchuk83}, however, only fields are considered in \cite{gibbs70} and there are also some restrictions regarding the characteristic. In our case of the Heisenberg group, the case of characteristic two is excluded, which corresponds to two different cases in the statement of Theorem \ref{thmlev}. We list below some types of automorphisms from \cite{gibbs70} (in the special case of the Heisenberg group) and we use them to give a more direct proof of Theorem \ref{thmlev}(1).
\begin{enumerate}
  \item \emph{Central automorphisms} are of the form
  $$(x,y,z)\mapsto (x,y,z+\Psi_1(x)+\Psi_2(y)),$$
  where each $\Psi_i$ is additive. Such automorphisms correspond to the case of $A=I$.

  \item The \emph{graph automorphism} is given as follows
  $$(x,y,z)\mapsto (y,x,-z+xy).$$
This automorphism corresponds to the case $A=\begin{pmatrix}
 0 & 1 \\
 1 & 0
\end{pmatrix}$
 and $\Psi_1=0=\Psi_2$.

  \item \emph{Diagonal automorphisms} are of the form
  $$(x,y,z)\mapsto (ax,by,z)$$
 for $a,b\in K^*$. They correspond to diagonal matrices $A=\begin{pmatrix}
 a & 0 \\
 0 & b
\end{pmatrix}$ and $\Psi_1=0=\Psi_2$.

  \item Suppose that $\mathrm{char}(K)\neq 2$. Then, \emph{extremal automorphisms} are of the form
  $$(x,y,z)\mapsto \left(x+ay,y,z+\frac{a}{2}y^2\right)$$
for $a\in K$. These automorphisms correspond to the case of $A=\begin{pmatrix}
 1 & a \\
 0 & 1
\end{pmatrix}$,
 $\Psi_1=0$ and $\Psi_2(y)=\frac{a}{2}y^2$.

\end{enumerate}
It is easy to see that the matrices $A$ appearing in Items $(2)$--$(4)$ above generate $\mathrm{GL}_2(K)$ which gives another argument for Theorem \ref{thmlev}(1).
\end{remark}

\section{Model completeness}\label{secmc}
\subsection{Interpretations}
 For precise definitions regarding interpretations, we refer to \cite{HoMo}. A structure $\mathcal{N}$ is \emph{interpretable} in a structure $\mathcal{M}$, if every set definable in $\mathcal{N}$ (including its universe) is also interpretable in  $\mathcal{M}$. Such an interpretation yields the following functor
$$\Gamma:\textbf{Models}(\mathrm{Th}(\mathcal{M}))\to \textbf{Models}(\mathrm{Th}(\mathcal{N})),$$
where for a theory $T$, $\textbf{Models}\left(T\right)$ is the category in which the objects are models of $T$ and the morphisms are elementary embeddings between them. For more details, see \cite[Theorem 5.3.3]{HoMo}.

Let $K$ be a field. We have the obvious interpretation of the field $H(K)$ in the group $K$ and the functor induced by this interpretation is $H$. We consider the language $\mathcal{L}$, which is the language of groups with two additional constant symbols. We recall Maltsev's interpretation of the field $K$ in $H(K)$ (regarded as an $\mathcal{L}$-structure) following  \cite{Int} and \cite{danmya} below, since we need its specific form for an application later (see Remark \ref{fieldint2}(2) and Remark \ref{last}).
\begin{fact}[Maltsev \cite{Maltsev}]\label{fieldint}
The field $K$ is interpretable in the $\mathcal{L}$-structure
$$\left(H(K);\cdot,(1,0,0)_K,(0,1,0)_K\right)$$
in the following way.
\begin{enumerate}
  \item The universe is $Z(H(K))$ which is a definable subset of $H(K)$.

  \item The formula defining the field addition is just the group multiplication
$$\oplus(x,y,z) := xy = z.$$

  \item The formula for the graph of the field multiplication $\otimes(x, y, z)$ is given below:
$$(\exists x' \exists y')\left([x',u]=[y',v]=I \ \wedge \ [x',v] = x \ \wedge \ [u,y'] = y \ \wedge \ [x',y']= z\right),$$
where $I=(0,0,0)$ denotes the identity element of the Heisenberg group and
$$u:=(1,0,0),\quad v:=(0,1,0).$$
\end{enumerate}
Moreover, this is an ``$\exists_1^+$-interpretation'' in the terminology from \cite{HoMo}, that is it is given by positive existential formulas.
\end{fact}
\begin{remark}\label{fieldint2}
We comment here on the interpretation from Lemma \ref{fieldint}.
\begin{enumerate}
  \item We get the following interpretation functor
 $$\Theta : \textbf{Models}\left(\Th\left(H(K); \cdot,(1,0,0),(0,1,0)\right)\right) \to \textbf{Models}\left(\Th(K) \right).$$

  \item By the moreover claim from Lemma \ref{fieldint} and \cite[Theorem 5.3.4(b)]{HoMo}, the functor from Item $(1)$ extends to a functor
 $$\Theta_{1-1} : \textbf{Models}_{1-1}\left(\Th\left(H(K); \cdot,(1,0,0),(0,1,0)\right)\right) \to \textbf{Models}_{1-1}\left(\Th(K) \right),$$
 where for any theory $T$, we denote by $\textbf{Models}_{1-1}(T)$ the category of models of $T$ with embeddings (\cite[Theorem 5.3.4(b)]{HoMo} even allows to replace embeddings with arbitrary homomorphisms, but in our case they are embeddings anyway).

  \item The map
$$t_K:(K;+,\cdot)\to (Z(H(K));\oplus,\otimes),\quad t_K(x)=(0,0,x)_K$$
is a natural and $K$-definable isomorphism of fields, that is for any field homomorphism $\alpha:K\to M$ the following diagram is commutative.
\begin{center}
\begin{tikzcd}
\Theta(H(K)) \arrow[r, "\Theta(H(\alpha))"]  & \Theta(H(M))
\\
K \arrow[r, "\alpha"] \arrow[u, "t_K"] & M \arrow[u, "t_M"]
\end{tikzcd}
\end{center}
Thus, we have an isomorphism (given by a definable map) between the identity functor on ${\normalfont \textbf{Models}}_{1-1}(\Th(K))$ and the composition functor $\Theta_{1-1}\circ H$.
\end{enumerate}
\end{remark}

\subsection{Test for model completeness and main result}

We follow here briefly the presentation from \cite{HKTY}.
\begin{definition}\label{mcparam}
Let $\mathcal{L}$ be a language and $M$ be an $\mathcal{L}$-structure. We say that $M$ is \emph{model complete} if $\mathrm{Th}(M)$ is
    model complete.
\end{definition}
\begin{remark}\label{mcspecial}
To test model completeness of a theory $T$, it is enough to consider embeddings between \emph{special models}  (as in \cite[Section 10.4]{HoMo}) of $T$. \end{remark}
We will need the following result which was suggested by Will Johnson.

\begin{theorem}[Theorem 2.17 in \cite{HKTY}]\label{pkthm}
Suppose
$$\Gamma :{\normalfont \textbf{Models}}(T_1) \to {\normalfont \textbf{Models}}(T_2)$$
is an interpretability functor and  $M_2\models T_2$ is special.
If there is $M_1\models T_1$ such that $M_2\equiv \Gamma(M_1)$, then there is $M_1'\models T_1$ such that $M_2\cong \Gamma(M_1')$.
\end{theorem}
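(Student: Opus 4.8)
The plan is to prove Theorem~\ref{pkthm} by combining the functoriality of interpretations with the homogeneity and universality properties enjoyed by special models. Recall that an interpretability functor $\Gamma$ sends models of $T_1$ to models of $T_2$ and, crucially, \emph{preserves elementary equivalence}: if $M_1 \equiv N_1$, then $\Gamma(M_1) \equiv \Gamma(N_1)$. So the hypothesis $M_2 \equiv \Gamma(M_1)$ places $M_2$ in the elementary equivalence class of an object in the image of $\Gamma$. The goal is to upgrade this elementary equivalence to an honest isomorphism after possibly replacing $M_1$ by an elementarily equivalent $M_1'$.

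First I would recall the key structural fact about special models (see \cite[Section~10.4]{HoMo}): any two special models of the same cardinality that are elementarily equivalent are in fact isomorphic, and more generally special models are $\kappa$-homogeneous and $\kappa$-universal for the appropriate $\kappa$. The natural strategy is therefore to arrange for $\Gamma$ to output a special model of the \emph{same} cardinality as $M_2$. Since $M_2 \equiv \Gamma(M_1)$, I would pass to a special model $M_1'$ of $T_1$ that is elementarily equivalent to $M_1$ and whose cardinality is large enough; the existence of special models of suitable cardinality in every elementary equivalence class is part of the standard theory. Because $\Gamma$ is an interpretation, $\Gamma(M_1')$ is an $M_1'$-definable structure, so $|\Gamma(M_1')|$ is controlled by $|M_1'|$ (they agree for infinite models, up to the fixed arity of the interpretation), which lets me match cardinalities.

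The two remaining points are: (i) showing $\Gamma(M_1')$ is itself special, and (ii) concluding $M_2 \cong \Gamma(M_1')$. For (i), I would use that $\Gamma$ preserves elementary equivalence and that an interpretation of a special model is special — this follows because the homogeneity/universality witnesses transfer through the (quantifier-free-definable) interpreting formulas; a cleaner route is to take $M_1'$ special of cofinality-and-cardinality data chosen so that $\Gamma(M_1')$ inherits speciality directly from the saturation-and-chain characterization of special models (a special model is the union of an elementary chain of saturated models of increasing cardinality, and $\Gamma$ applied to such a chain yields an elementary chain of saturated models since interpretation functors send elementary embeddings to elementary embeddings by \cite[Theorem~5.3.3]{Ho}). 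For (ii), once both $M_2$ and $\Gamma(M_1')$ are special of the same cardinality and satisfy $M_2 \equiv \Gamma(M_1') $ (which holds since $\Gamma(M_1') \equiv \Gamma(M_1) \equiv M_2$), the uniqueness theorem for special models yields $M_2 \cong \Gamma(M_1')$.

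\textbf{The main obstacle} I anticipate is verifying that $\Gamma$ sends special models to special models while simultaneously controlling cardinalities. The delicate part is that speciality is a condition on a model built as a union of an elementary chain of saturated submodels indexed by cardinals below the total size, and one must check that applying the definable construction $\Gamma$ commutes well enough with this chain decomposition: concretely, that $\Gamma$ of a saturated model is saturated (which can fail naively if the interpretation distorts types, but holds because an $n$-dimensional interpretation turns $\kappa$-saturation of $M_1'$ into $\kappa$-saturation of $\Gamma(M_1')$, since every type over a small parameter set in $\Gamma(M_1')$ pulls back to a type over a small parameter set in $M_1'$) and that $\Gamma$ preserves the relevant elementary chain. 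I would handle this by invoking the elementary-embedding preservation of interpretation functors together with the transfer of saturation through interpretations, and then appeal to the standard uniqueness of special models to close the argument.
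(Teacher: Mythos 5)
This statement is imported verbatim from \cite{HKTY} (Theorem 2.17) and used as a black box; the paper contains no proof of it, so your attempt can only be judged on its own merits. Your overall architecture is the natural one, and most of its ingredients are correct: an interpretability functor preserves elementary embeddings (\cite[Theorem 5.3.3]{Ho}), hence sends a union of an elementary chain to the union of the image chain; it preserves $\lambda^{+}$-saturation, since a type over a small parameter set of the interpreted structure pulls back, via representatives and the translation of formulas, to a finitely satisfiable partial type over a small parameter set of the ambient model; and two elementarily equivalent special models of the same cardinality are isomorphic. (One small correction: a special model is a union of an elementary chain of $\lambda^{+}$-saturated models indexed by cardinals $\lambda<|M|$, not of saturated models; this does not harm your argument.)

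The genuine gap is the cardinality matching. First, a minor point: your claim that $|\Gamma(M_1')|$ agrees with $|M_1'|$ for infinite models is false for general interpretations --- a definable quotient can be much smaller than the model, even finite; what rescues you is saturation (if $M_1'$ is $\lambda^{+}$-saturated and $\Gamma(M_1')$ is infinite, then $|\Gamma(M_1')|\geqslant\lambda^{+}$), and this needs to be said. Second, and more seriously, your plan requires a special model $M_1'$ of $\Th(M_1)$ of cardinality exactly $\mu:=|M_2|$, and such a model need not exist: the existence theorem for special models of arbitrary theories only covers cardinals with $\mu=\sup_{\lambda<\mu}2^{\lambda}$, whereas $M_2$ could be, for instance, a saturated model of cardinality $\aleph_1<2^{\aleph_0}$ of an $\omega$-stable theory $T_2$ (saturated implies special, and at a successor cardinal special implies saturated), while $T_1$ is unstable --- say $T_1$ is the theory of a two-sorted structure pairing a model of $T_2$ with a random graph and $\Gamma$ is the projection --- and then $T_1$ has no saturated, hence no special, model of cardinality $\aleph_1$ at all. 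In that situation your construction produces nothing, although the theorem is still true (expand the given $M_2$ by a countable random graph). The repair is to saturate $M_1'$ only \emph{relative to} $\Gamma$: build an elementary chain realizing the pullbacks of $L_{T_2}$-types, using the fact that $\Th(M_2)$ has few enough types (witnessed by $M_2$ itself being special) to keep the cardinality down to $\mu$, so that $\Gamma(M_1')$ is special of cardinality $\mu$ without $M_1'$ itself being special. Alternatively, for the use made of the theorem in this paper one is free to choose the special models in Remark \ref{mcspecial} of cardinality $\beth_\delta$ with $\delta$ a limit ordinal, and in that regime your argument does go through.
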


We are ready now to show the main result of this paper.

\begin{theorem}
Let $K$ be a field. Then the group $H(K)$ is model complete if and only if $K$ is a model complete field.
\end{theorem}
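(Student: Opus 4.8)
The plan is to prove the two implications separately, in each case transporting embeddings through the two interpretation functors $\Gamma = H$ and $\Theta$ and invoking the classification of monomorphisms from Theorem \ref{homthm}. If $K$ is finite, then $H(K)$ is finite, and finite structures are trivially model complete (every embedding between models of the complete theory of a finite structure is an isomorphism, hence elementary), so both conditions hold. Thus I may assume that $K$, and therefore every $L \equiv K$, is infinite, which is what is needed to apply Theorem \ref{homthm}.

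\emph{Assume $K$ is a model complete field.} I want to show that every monomorphism between models of $\Th(H(K))$ is elementary, and by Remark \ref{mcspecial} it suffices to treat monomorphisms $f \colon G_1 \to G_2$ between special models. First I would realize each $G_i$ as a Heisenberg group: applying Theorem \ref{pkthm} to the interpretation functor $\Gamma = H \colon \textbf{Models}(\Th(K)) \to \textbf{Models}(\Th(H(K)))$ with $M_1 = K$ (so that $G_i \equiv H(K) = \Gamma(K)$) produces a field $L_i \models \Th(K)$ with $G_i \cong H(L_i)$. Transporting $f$ along these isomorphisms (which are elementary) reduces the problem to showing that a monomorphism $f \colon H(L_1) \to H(L_2)$ between Heisenberg groups of infinite fields is elementary. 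By Theorem \ref{homthm} I can write $f = \Phi \circ H(\theta)$ with $\Phi \in \aut(H(L_2))$ and $\theta \colon L_1 \to L_2$ a field homomorphism. Now $\theta$ is an embedding between models of the model complete theory $\Th(K)$, hence elementary; since $\Gamma = H$ is an interpretation functor it carries elementary embeddings to elementary embeddings, so $H(\theta)$ is elementary; and $\Phi$, being an automorphism, is elementary. Therefore $f = \Phi \circ H(\theta)$ is elementary, as required.

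\emph{Assume $H(K)$ is a model complete group.} Let $\theta \colon K_1 \to K_2$ be an embedding between models of $\Th(K)$; I must show $\theta$ is elementary. Applying $H$ yields a monomorphism $H(\theta) \colon H(K_1) \to H(K_2)$ between models of $\Th(H(K))$, which is elementary by the assumed model completeness of $H(K)$. Since $H(\theta)$ fixes the distinguished elements $(1,0,0)$ and $(0,1,0)$, a constant-preserving elementary embedding is elementary for the expanded language, so $H(\theta)$ remains elementary as a map of the $\mathcal{L}'$-structures $(H(K_i), \cdot, (1,0,0), (0,1,0))$. Feeding this through Maltsev's interpretation functor $\Theta$ from Lemma \ref{fieldint} and Remark \ref{fieldint2} produces an elementary embedding $\Theta(H(\theta))$ of the interpreted fields. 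Finally, the definable isomorphisms $\alpha_{K_i}$ of Remark \ref{fieldint2}(2) satisfy $\alpha_{K_2} \circ \theta = \Theta(H(\theta)) \circ \alpha_{K_1}$ by direct inspection of the interpretations (both sides send $x$ to $(0,0,\theta(x))$); since the $\alpha_{K_i}$ are isomorphisms and $\Theta(H(\theta))$ is elementary, $\theta$ is elementary.

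I expect the main obstacle to be the forward direction, specifically the step realizing an abstract special model of $\Th(H(K))$ as a Heisenberg group $H(L)$ of a field, as this is exactly what licenses the use of the monomorphism classification in Theorem \ref{homthm}. The accompanying point requiring care is assembling the functorial inputs correctly: that interpretation functors preserve elementarity, that automorphisms and the maps $\alpha_K$ are elementary, and that the constants are respected when passing to the $\mathcal{L}'$-expansion, so that the factorization $f = \Phi \circ H(\theta)$ genuinely yields an elementary map.
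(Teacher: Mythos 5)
Your proof is correct and follows essentially the same route as the paper: special models plus Theorem \ref{pkthm} to realize the groups as $H(L_i)$, Theorem \ref{homthm} to factor the monomorphism as $\Phi \circ H(\theta)$, and Maltsev's interpretation $\Theta$ with the naturality of $\alpha_K$ for the converse. Your explicit disposal of the finite-field case is a small but welcome addition, since Theorem \ref{homthm} is stated only for infinite fields and the paper leaves this point implicit.
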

\begin{proof}
Assume that $K$ is model complete. Let $G \equiv H(K) \equiv N$ and $f : G \to N$ be a group monomorphism. We need to show that $f$ is elementary. By Remark \ref{mcspecial}, we can assume that $H$ and $N$ are special. By applying Theorem \ref{pkthm} to the interpretability functor
$$H :
{\normalfont \textbf{Models}}\left(\Th(K)\right) \to
{\normalfont \textbf{Models}}\left(\Th(H(K))\right),$$
there are fields $F$, $M$ such that
$$G \cong H(F),\quad N \cong H(M),\quad F \equiv K \equiv M.$$
Since any isomorphism is elementary and the composition of elementary maps is elementary, we can also assume that $f : H(F) \to H(M)$. By Theorem \ref{homthm}, we can finally assume that $f = H(\alpha)$ for some field monomorphism $\alpha: F \to M$. Since $K$ is model complete and $F \equiv K \equiv M$, we get that the map $\alpha$ is elementary. Because interpretability functors take elementary embeddings to elementary embeddings, we conclude that $f$ is elementary.

We assume now that the group $H(K)$ is model complete and let
$$T:= \Th\left((H(K), \cdot,(1,0,0),(0,1,0)) \right).$$
The $\mathcal{L}$-theory $T$ is still model complete, since we only added constant symbols to the language. Let $F$ and $M$ be fields such that
$$F \equiv K \equiv M$$
and $\alpha : F \to M$ a field monomorphism. Then, we have
$$H(F) \equiv H(K) \equiv H(M).$$
By our assumption, the map
$$H(\alpha): H(F) \to H(M)$$
 is elementary. By interpreting the extra constant symbols from the language $\mathcal{L}$ as $(1,0,0)$ and $(0,1,0)$ in both structures, we can treat $H(F)$ and $H(M)$ as models of $T$.
Moreover, $H(\alpha)$ sends $(1,0,0)_F$ and $(0,1,0)_F$ to $(1,0,0)_M$ and $(0,1,0)_M$ respectively, thus it is also an elementary embedding of models of $T$. By Remark \ref{fieldint2}(1), we have that $\beta := \Theta(H(\alpha))$ is elementary. By Remark \ref{fieldint2}(2), we get that $\alpha = \Psi \circ \beta \circ \Phi$ for some isomorphisms $\Psi, \Phi$. Therefore, we conclude that $\alpha$ is elementary.
\end{proof}

\begin{remark}\label{last}
We comment here on some natural generalizations of the arguments from the last proof.
\begin{enumerate}
  \item Let us assume we are in the set-up from Remark \ref{fieldint2}, that is we have an interpretability functor $\Gamma$ which extends to a functor
   $$\Gamma : \textbf{Models}_{1-1}\left(T\right) \to \textbf{Models}_{1-1}\left(T'\right)$$
having a right quasi-inverse as in Remark \ref{fieldint2}(3). In such a case, if $T$ is model complete then $T'$ is model complete as well, which was observed in the case of existential bi-interpretations in \cite[Corollary 2.16]{intfus1}.

\item There is a natural generalization of the first implication from the above proof as well. Let us assume we have an interpretability functor $\Gamma$ which extends to a functor
   $$\Gamma : \textbf{Models}_{1-1}\left(T\right) \to \textbf{Models}_{1-1}\left(T'\right)$$
   satisfying a ``weak Borel-Tits property'', that is for all $\mathcal{M}_1,\mathcal{M}_2\models T$ and for all
   $$f:\Gamma(\mathcal{M}_1)\hookrightarrow \Gamma(\mathcal{M}_2)$$
   there are $\alpha:\mathcal{M}_1\hookrightarrow \mathcal{M}_2$ and $t\in \mathrm{Aut}(\mathcal{M}_2)$ such that
   $$f=t\circ \Gamma(\alpha).$$
   In such a case, if $T$ is model complete then $T'$ is model complete as well.

This property was used to show model completeness of some groups in \cite{HKTY}, and it was applied there in the form of the actual Borel-Tits theorem (see \cite[(A)]{Botits} and \cite[Theorem 1.3]{Steinberg}).

\end{enumerate}
\end{remark}

\section{Other model-theoretic properties of $H(K)$}\label{seclast}

In this section, we focus on other model-theoretic properties of the Heisenberg group and our results are mostly negative. We consider first the question of quantifier elimination, which can be settled easily using the work of Cherlin and Felgner from \cite{cherlin82}.
\begin{theorem}
If $K$ is an infinite field, then the group $H(K)$ does not have quantifier elimination.
\end{theorem}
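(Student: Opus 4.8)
The plan is to exhibit two single elements of $H(K)$ that satisfy exactly the same quantifier-free formulas over $\emptyset$ but have different complete $1$-types; this immediately refutes quantifier elimination. Indeed, if $H(K)$ had quantifier elimination, then for every formula $\varphi(x)$ there would be a quantifier-free $\psi(x)$ with $H(K)\models\forall x\,(\varphi(x)\leftrightarrow\psi(x))$, so any two elements realising the same quantifier-free type over $\emptyset$ would satisfy the same $\varphi$, i.e.\ would have the same complete type over $\emptyset$.

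Concretely, I would take the central element $z=(0,0,1)$ and the non-central element $g=(1,0,0)$. First I would record that these two elements have the same order: from $(1,0,0)^n=(n,0,0)$ and $(0,0,1)^n=(0,0,n)$ one sees that the order of each equals $\mathrm{char}(K)$ when the characteristic is positive and is infinite otherwise. The key point is that in the pure group language, with no parameters, every term in the single variable $x$ denotes an element of $\langle x\rangle$ and hence evaluates to a power $x^m$; thus every atomic formula $s(x)=t(x)$ is equivalent to one of the form $x^m=1$. Consequently the quantifier-free $1$-type of an element over $\emptyset$ is determined entirely by its order, and therefore $z$ and $g$ realise the same quantifier-free type over $\emptyset$.

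On the other hand, the formula $\chi(x):=\forall y\,(x\cdot y=y\cdot x)$ expressing centrality separates them: $\chi(z)$ holds because $z\in Z(H(K))$, whereas $\chi(g)$ fails, as witnessed by $[(1,0,0),(0,1,0)]=(0,0,1)\neq I$. Hence $z$ and $g$ have the same quantifier-free type but distinct complete types over $\emptyset$, so $H(K)$ cannot have quantifier elimination.

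The only genuinely delicate step is the computation of quantifier-free $1$-types, namely the claim that in the pure group language a single element carries no quantifier-free information beyond its order; once this is isolated, everything else is routine, and in fact the argument uses only $|K|\ge 2$. This elementary obstruction is precisely what the classification of quantifier-eliminable groups by Cherlin and Felgner \cite{cherlin82} makes systematic, and one could alternatively deduce the statement from their results by observing that $[H(K),H(K)]=Z(H(K))\cong(K,+)$ is infinite; I prefer the self-contained argument above.
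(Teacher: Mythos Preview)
Your proof is correct and takes a genuinely different route from the paper. The paper simply invokes two classification theorems of Cherlin and Felgner: in characteristic $0$ it uses that a locally solvable torsion-free group with quantifier elimination must be abelian, and in characteristic $p>0$ it uses that a hypercentral $p$-group with quantifier elimination must be abelian; in each case $H(K)$ is non-abelian, giving the contradiction.

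Your argument is more self-contained and uniform: it does not split on the characteristic, does not import any external structure theorem, and in fact works for finite $K$ as well. The key observation that quantifier-free $1$-types over $\emptyset$ in the pure group language are determined by the order of the element is exactly right (terms in one variable lie in $\langle x\rangle$, so atomic formulas reduce to equalities $x^m=x^n$), and your witnesses $z=(0,0,1)$ and $g=(1,0,0)$ are well chosen since they visibly have the same order while being separated by the centrality formula. What the paper's approach buys in exchange is context: it situates $H(K)$ inside the Cherlin--Felgner classification, making clear that the obstruction is the general one (non-abelianness) rather than something special to the Heisenberg group. Your closing remark already acknowledges this alternative, which is appropriate.
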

\begin{proof}
If $\mathrm{char}(K)=0$, then $H(K)$ is torsion-free. Therefore, we can use e.g. \cite[Theorem 3.4]{cherlin82} which says that locally solvable torsion-free groups with quantifier elimination are necessarily commutative (and divisible).

If $\mathrm{char}(K)=p>0$, then $H(K)$ is a $p$-group, that is the order of each element of $H(K)$ is a power of $p$ (here, at most $p^2$). Then, we can use e.g. \cite[Theorem 4.2]{cherlin82} which says that hipercentral (in particular, nilpotent) $p$-groups with quantifier elimination are commutative.
\end{proof}

There are several questions in Section 5 of \cite{Int} concerning the model theory of Heisenberg groups. One of them regards the (effective) bi-interpretability of $K$ and $H(K)$. Regarding this question, we point out the following negative result.
\begin{prop}\label{notbiint}
If $K$ is an infinite field, then the interpretation $K\mapsto H(K)$ cannot be ``inverted to a bi-interpretation'' even using extra parameters.
\end{prop}
\begin{proof}
Let us suppose there is an interpretation of $K$ in $H(K)$ ``inverting'' the interpretation $K\mapsto H(K)$ for an uncountable field $K$ using parameters contained in a countable subfield $K_0\subset K$. Then, by e.g. \cite[Section 5.4, Exercise 8(b)]{HoMo}, we get the induced isomorphism:
$$H_K:\mathrm{Aut}_{\mathrm{fields}}(K/K_0)\to \mathrm{Aut}_{\mathrm{groups}}(H(K)/H(K_0)).$$
However, there many automorphisms of $H(K)$ fixing pointwise any countable subgroup $H_0<H(K)$ which are not coming from the field automorphisms, e.g. the central automorphisms from Remark \ref{explicit}(1).
\end{proof}
\begin{remark}
One can also argue above by showing that the interpretation of $H(K)$ in $K$ is not \emph{full} (see \cite[Def. 2.1(3)]{CasHas}), since, for example,
the projection map
$$p:H(K)\to Z(H(K)),\quad p((a,b,c)_K)=(0,0,c)_K$$
is not definable in the pure group $H(K)$ (even using parameters) which can be seen by an automorphism argument similarly as in the proof of Proposition \ref{notbiint}.
\end{remark}
Regarding the ``real'' question about \emph{any} possible bi-interpretability between $K$ and $H(K)$, the situation is more complicated. It is suggested in \cite{Int} to show that the groups $\mathrm{Aut}(K)$ and $\mathrm{Aut}(H(K))$ are not isomorphic and this is the route we take.  We could success in some particular cases only and these cases are listed below.
\begin{theorem}\label{notbint}
A field $K$ is not bi-interpretable with the group $H(K)$, if $K$ comes from the following list.
\begin{enumerate}
  \item Finitely generated fields.

  \item Real closed fields.

\item $p$-adically closed fields.

  \item Algebraically closed fields.
\end{enumerate}
\end{theorem}
\begin{proof}
By Theorem \ref{thmlev}, we get the following
$$|\mathrm{Aut}(H(K))|\geqslant |\mathrm{End}(K,+)|$$
for any field $K$. Since for finitely generated fields we have
$$|\mathrm{End}(K,+)|>|\mathrm{Aut}(K)|,$$
Item $(1)$ follows.

Assume that $K$ is bi-interpretable with $H(K)$ for a real closed field $K$. Then, we also get that $\Rr$ is bi-interpretable with $H(\Rr)$, since the theory of real closed fields is complete. Thus, we can conclude as above, since $\mathrm{Aut}(\Rr)$ is trivial. The argument for $p$-adically closed fields is analogous using that the group $\mathrm{Aut}(\Qq_p)$ is trivial as well.

Regarding the last case, assume that $K$ is an algebraically closed field and let $F$ be the prime subfield of $K$. We have the following exact sequence of groups
$$1\longrightarrow \mathrm{Aut}\left(K/F^{\alg}\right)\longrightarrow \mathrm{Aut}\left(K\right)
\longrightarrow \mathrm{Gal}(F)\longrightarrow 1,$$
where $\mathrm{Gal}(F)$ is the absolute Galois group of $F$, so $|\mathrm{Gal}(F)|=2^{\aleph_0}$. Lascar showed that the group $\mathrm{Aut}\left(K/F^{\alg}\right)$ is simple (it was proved for $K=\Cc$ in \cite{lascar_simple} and the general case was shown in \cite{BCHMP}).
In particular, if $H$ is a normal subgroup of $\mathrm{Aut}(K)$, then we have
$$|H|\leqslant 2^{\aleph_0}\quad \text{or} \quad |\mathrm{Aut}(K):H|\leqslant 2^{\aleph_0}.$$
Assume now that $K$ is bi-interpretable with $H(K)$ and we will reach a contradiction. Without loss of generality, $K$ is uncountable. By Theorem \ref{thmlev}, there is $N\leqslant \mathrm{GL}_2(K)$ such that
$$\mathrm{Aut}(K)\cong \mathrm{Aut}(H(K))\cong \left(\mathrm{End}(K,+)^2\rtimes N\right)\rtimes \mathrm{Aut}(K).$$
Let us take the normal subgroup $H$ of $\mathrm{Aut}(K)$ corresponding to $\mathrm{End}(K,+)^2$ under the isomorphism above.
We obtain that
$$|H|> 2^{\aleph_0}\quad \text{and} \quad |\mathrm{Aut}(K):H|> 2^{\aleph_0},$$
which gives a contradiction.
\end{proof}

\begin{remark}
If $K$ is an infinite field coming from the list given in the statement of Theorem \ref{notbint}, then a similar proof also shows that $K$ and $H(K)$ are not bi-interpretable even using extra parameters. We have tried to show the same for any infinite field, but we could not. The existence of a field $K$ such that for all $M\equiv K$ we have a topological isomorphism
$$\mathrm{Aut}(M)\cong \left(\mathrm{End}(M,+)^2\rtimes \mathrm{GL}_2(M)\right)\rtimes \mathrm{Aut}(M)$$
(see Section \ref{gibbssec}) should lead to a contradiction, but we were unable to obtain it.

It is shown in \cite{khelif} and \cite{niesgps} that $\Zz$ and $H(\Zz)$ are not bi-interpretable (even using extra parameters) by finding a ``large'' (here: recursively saturated) ring $R\equiv \Zz$ such that
$$|\mathrm{Aut}(H(R))|>|\mathrm{Aut}(R)|.$$
A corresponding property does not hold for arbitrary fields (in place of the ring $\Zz$). For example, it does not hold for algebraically closed fields $K$, since for such fields we always have
$$|\mathrm{Aut}(H(K))|=|\mathrm{Aut}(K)|.$$
\end{remark}

\bibliographystyle{plain}
\bibliography{harvard}

\end{document}